\newenvironment{proof}{\begin{trivlist}
                       \item[]{\bf Proof.}
                       \hspace{0cm}}{\hfill $\Box$
                       \end{trivlist}}
\date{}
\begin{document}

%\centerline{\bf Journal's Title, Vol. x, 200x, no. xx, xxx - xxx}

\centerline{}

\centerline{}

\centerline {\Large{\bf Representation of vector fields}}

\centerline{}

\centerline{\bf {A. G. Ramm$\dag$\footnotemark[3]}}

\centerline{}

\centerline{$\dag$Mathematics Department, Kansas State University,}

\centerline{Manhattan, KS 66506-2602, USA}

\renewcommand{\thefootnote}{\fnsymbol{footnote}}
\footnotetext[3]{Corresponding author. Email: ramm@math.ksu.edu}

\newtheorem{Theorem}{\quad Theorem}[section]

\newtheorem{Definition}[Theorem]{\quad Definition}

\newtheorem{Corollary}[Theorem]{\quad Corollary}

\newtheorem{Lemma}[Theorem]{\quad Lemma}

\newtheorem{Example}[Theorem]{\quad Example}

\newtheorem{Remark}[Theorem]{\quad Remark}

\newtheorem{thm}{Theorem}[section]
\newtheorem{cor}[section]{Corollary}
\newtheorem{lem}[section]{Lemma}
\newtheorem{dfn}[section]{Definition}
\newtheorem{rem}[section]{Remark}

\newcommand{\bee}{\begin{equation*}}
\newcommand{\eee}{\end{equation*}}
\newcommand{\be}{\begin{equation}}
\newcommand{\ee}{\end{equation}}
\newcommand{\ba}{\begin{align}}
\newcommand{\ea}{\end{align}}
\newcommand{\pn}{\par\noindent}
\newcommand{\RRR}{\mathbb{R}^3}

\begin{abstract} \noindent
A simple proof is given for the explicit formula which allows one to recover a $C^2-$smooth 
vector field $A=A(x)$ in $\mathbb{R}^3$, decaying at infinity, from the knowledge of its $\nabla \times A$
and $\nabla \cdot A$. The representation of $A$ as a sum of the gradient field and a divergence-free
vector fields is derived from this formula. Similar results are obtained for a vector field in a 
bounded   $C^2-$smooth domain.
\end{abstract}

{\bf Mathematics Subject Classification:} MSC 2010, 26B99, 76D99, 78A99 \\

{\bf Keywords:} vector fields; representation of vector fields

\section{Introduction}

In fluid mechanics and electrodynamics one is often interested in the following questions:
\begin{itemize}
\item[Q1.] Let $A(x), x \in \mathbb{R}^3$, be a twice differentiable in $\mathbb{R}^3$ vector field vanishing at infinity together with its two derivatives. Given $\nabla \times A$ and $\nabla \cdot A$, can one recover $A(x)$ uniquely? Can one give an explicit formula for $A(x)$?
\item[Q2.] Can one find a scalar field $u = u(x)$ and a divergence-free vector field $B(x)$, $\nabla \cdot B = 0$, such that
\be\label{eq1}
A = \nabla u + B, \qquad \int_{\mathbb{R}^3}\nabla u \cdot B dx = 0.
\ee
\end{itemize}

These questions were widely discussed in the literature, for example, in \cite{1} - \cite{3}. Our aim is to give a simple answer to these questions. By $H^m(\RRR)$, $H^m(D)$, the usual Sobolev spaces are denoted, $H^m(D, w(x))$ is the weighted Sobolev space, where $w=w(x)>0$ is the weight function.

\section{Answer to question Q1.}
Denote $\nabla \times A := a$, $\nabla \cdot A := f$. Then $\nabla \times \nabla \times A = \nabla \times a$. It is well known that
\be\label{eq2}
-\nabla^2A = \nabla \times \nabla \times A - \nabla \, \nabla \cdot A.
\ee
Thus,
\be\label{eq3}
-\nabla^2 A = \nabla \times a - \nabla f.
\ee
Let $g(x,y) := \frac{1}{4\pi|x - y|}$. Then
\be\label{eq4}
-\Delta g(x,y) = \delta(x - y),
\ee
where $\delta(x)$ is the delta function. Thus, from \eqref{eq3} one gets
\be\label{eq5}
A(x) = \int_{\RRR}g(x,y) \nabla \times a dy - \int_{\RRR}g(x,y) \nabla f dy.
\ee

{\em This formula gives an analytical representation of $A(x)$ in terms of $a = \nabla \times A$ and $f = \nabla \cdot A$.}

To prove {\em the uniqueness of this representation,} assume that there are two different vector fields $A$ and $F$ that have the same $a = \nabla \times A = \nabla \times F$ and $f = \nabla \cdot A = \nabla \cdot F$. Then, by formula \eqref{eq2}, one has
\be\label{eq6}
-\nabla^2(A - F) = 0.
\ee

Therefore $A - F$ is a harmonic function in $\RRR$ which vanishes at infinity. By the maximum principle such a function is equal to zero identically.
 
{\em  Thus, $A(x)$ is uniquely determined in $\RRR$ by formula \eqref{eq5} if $\nabla \times A$ and $\nabla \cdot A$ are known and if $A$ vanishes at infinity.} \hfill $\Box$

\section{Answer to question Q2.}
Formula \eqref{eq5} can be written as
\be\label{eq7}
A(x) = \nabla \times \int_{\RRR}g(x,y) a(y) dy - \nabla \int_{\RRR}g(x,y) f(y)dy,
\ee
provided that $a(y)$ and $f(y)$ decay at infinity sufficiently fast, for example, if 
$$|A(x)| + |\partial A(x)| + |\partial^2 A(x)| \leq c(1 + |x|)^{-\gamma}, \gamma > 3,$$ 
where $\partial$ is an arbitrary first order derivative, so that the following integrations by parts can be justified:
\be\label{eq8}
\nabla \times \int_{\RRR}g(x,y) a(y) dy = - \int_{\RRR}\left[ \nabla_y g(x,y), a(y)  \right]dy = \int_{\RRR}g(x,y)\nabla \times a(y) dy,
\ee
\be\label{eq9}
-\nabla \int_{\RRR}g(x,y) f(y) dy = \int_{\RRR}\nabla_y g(x,y) f(y) dy = - \int_{\RRR} g(x,y) \nabla f(y) dy.
\ee
One may also assume that $A \in H^2(\RRR, 1 + |x|^\gamma), \gamma > 2,$ in order to justify formulas \eqref{eq1}, \eqref{eq5}, \eqref{eq7}.

It follows from \eqref{eq1} and \eqref{eq7} that
\be\label{eq10}
u(x) = -\int_{\RRR} g(x,y) f(y) dy, \qquad B(x) = \nabla \times \int_{\RRR} g(x,y) a(y) dy.
\ee

To check the second formula \eqref{eq1}, it is sufficient to check that
\be \label{eq11}
	\int_{\RRR} \nabla u \cdot \nabla \times p dx=0,
\ee

provided that $u=u(x)$ and $p=p(x)$ decay at infinity sufficiently fast. In our case
$u$ is defined by formula \eqref{eq10} and $p(x)=\int_{\RRR} g(x,y) a(y)dy$.

Formula \eqref{eq11} can be verified by a direct calculation. Let $\frac{\partial u}{\partial x_j}:= u_{,j}$ and denote by $e_{jmq}$ the antisymmetric unit tensor: $e_{123}=1, e_{jmq}=\left\{
\begin{array}{ll}
	1  & \text{ if $jmq$ is even},\\
	-1 & \text{ if $jmq$ is odd}.
\end{array}
\right.$
The triple $jmq$ is called even if by an even number of transpositions it can be reduced to the triple 123. An odd triple $jmq$ is the one that is not even. A transposition is the change of the order of two neighboring indices.

The vector product can be written with the help of $e_{jmq}$ as follows: 
$$A \times B = [A,B]=e_{jmq} A_m B_q.$$ 
Here and below summation is understood over the repeated indices. For example, $(\nabla \times p)_j=e_{jmq} p_{q,m}$. With these notations one has
\be \label{eq12}
    \int_{\RRR} \nabla u\cdot \nabla\times p dx=e_{jmq}\int_{\RRR} u_{,j}p_{q,m}dx=-e_{jmq} \int_{\RRR} u p_{q,mj}dx=0,
\ee
because $e_{jmq}p_{q,mj}=0$.

Let us summarize the results.

\begin{thm} \label{thm1}
    Assume that a vector field $A(x)\in H_{loc}^2(\RRR)$ decays at infinity sufficiently fast, for example, $A(x)\in H^2(\RRR, 1+|x|^\gamma), \gamma >2$. Then, given $a:=\nabla\times A$ and $f:=\nabla\cdot A$ in $\RRR$ one can uniquely recover $A$ by formula \eqref{eq5}.

    Moreover, one can uniquely represent $A(x)$ by formula \eqref{eq1}, where $u$ and $B$ are uniquely defined by formula \eqref{eq10}.
\end{thm}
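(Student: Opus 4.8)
The plan is to assemble Theorem \ref{thm1} from the three ingredients already established in Sections 2 and 3, checking only that the stated decay hypothesis is strong enough to make each step rigorous. Concretely, the theorem bundles together (i) the recovery formula \eqref{eq5} with its uniqueness, (ii) the conversion of \eqref{eq5} into the manifestly-split form \eqref{eq7}, and (iii) the orthogonality relation in \eqref{eq1}. First I would record that the Newtonian-potential identity \eqref{eq5} is an immediate consequence of applying $(-\nabla^2)^{-1}$, realized by convolution against $g(x,y)$, to the vector Laplace equation \eqref{eq3}; the decay assumption $A \in H^2(\RRR, 1+|x|^\gamma)$ with $\gamma > 2$ guarantees that $a$ and $f$ lie in the weighted $L^2$ spaces for which the volume potentials converge and the classical solution of $-\nabla^2 w = \rho$ vanishing at infinity is given by $w = g * \rho$.

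Next I would dispatch uniqueness exactly as in Section 2: if $A$ and $F$ share the same curl $a$ and divergence $f$, then \eqref{eq2} forces $-\nabla^2(A-F)=0$ componentwise, so each component of $A-F$ is harmonic on all of $\RRR$ and tends to $0$ at infinity. A bounded entire harmonic function is constant by Liouville (equivalently, by the maximum principle applied on large balls), and the vanishing at infinity pins that constant to zero, giving $A \equiv F$. I would then pass from \eqref{eq5} to \eqref{eq7} by moving the differential operators outside the integrals and integrating by parts, precisely the manipulations \eqref{eq8}–\eqref{eq9}; here is where the pointwise bound $|A|+|\partial A|+|\partial^2 A| \le c(1+|x|)^{-\gamma}$, $\gamma>3$, does its work, since it kills the boundary terms at infinity and makes the integrals of $\nabla_y g(x,y)\,a(y)$ and $\nabla_y g(x,y)\,f(y)$ absolutely convergent. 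Reading off the two terms of \eqref{eq7} then yields the definitions \eqref{eq10} of $u$ and $B$.

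Finally I would verify that \eqref{eq10} genuinely solves Q2: that $\nabla\cdot B = 0$ because $B$ is a curl, and that the $L^2$-orthogonality $\int_{\RRR}\nabla u\cdot B\,dx = 0$ holds. The divergence-free property is automatic from $\nabla\cdot(\nabla\times(\cdot))=0$. For orthogonality I would invoke the index computation \eqref{eq11}–\eqref{eq12}: writing everything with the antisymmetric tensor $e_{jmq}$, integrating by parts to throw both derivatives onto $p$, and observing that contracting the symmetric second-derivative $p_{q,mj}$ against the antisymmetric $e_{jmq}$ gives zero. Uniqueness of the representation \eqref{eq1} itself then follows from the uniqueness of $A$ together with the uniqueness of the Newtonian potential solving $-\nabla^2 u = f$ (resp.\ $-\nabla^2 B = \nabla\times a$) subject to decay, so $u$ and $B$ are forced to be exactly \eqref{eq10}.

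The main obstacle is not any single computation but the bookkeeping of hypotheses: the bare statement assumes only $A \in H^2(\RRR, 1+|x|^\gamma)$ with $\gamma>2$, whereas the integration by parts in \eqref{eq8}–\eqref{eq9} was justified under the stronger pointwise decay $\gamma>3$. I would therefore be careful to argue that the weighted-Sobolev hypothesis still suffices to make the volume potentials and their first derivatives well defined and the boundary terms vanish in a limiting (large-ball) sense, or else to note explicitly that the two decay conditions play slightly different roles — the $H^2$-weight condition securing \eqref{eq5} and the orthogonality \eqref{eq1}, and the pointwise condition securing the rewriting \eqref{eq7}. Pinning down this compatibility, rather than the tensor identity or the Liouville argument, is the step that needs the most care.
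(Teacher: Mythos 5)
Your proposal is correct and follows essentially the same route as the paper, whose proof of Theorem \ref{thm1} consists precisely of the material you assemble: the Newtonian-potential inversion of \eqref{eq3} giving \eqref{eq5}, uniqueness via harmonic functions vanishing at infinity, the integration by parts \eqref{eq8}--\eqref{eq9} yielding \eqref{eq7} and \eqref{eq10}, and the antisymmetric-tensor computation \eqref{eq12} for the orthogonality in \eqref{eq1}. Your closing concern about reconciling the $\gamma>2$ weighted-Sobolev hypothesis with the $\gamma>3$ pointwise decay is exactly the issue the paper itself addresses (somewhat loosely) through its remarks and the estimates of $I_1$ and $I_2$ following the theorems.
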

\begin{thm} \label{thm2}
    Assume that $D \subset \RRR$ is a bounded domain with $C^2$-smooth boundary $S$, $A(x) \in H^2(D), a(x):=\nabla\times A(x), f:=\nabla\cdot A(x)$ and $\phi(s)=A|_{s \in S}$ are known. Then $A(x)$ is uniquely recovered by solving the Dirichlet problem
    \be \label{eq13}
        -\nabla^2 A=\nabla\times a(x)-\nabla f(x) \text{ in }D, \qquad A|_S=\phi(s).
    \ee
\end{thm}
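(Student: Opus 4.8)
The plan is to prove Theorem~\ref{thm2} by reducing the recovery of $A$ in the bounded domain $D$ to a standard Dirichlet boundary value problem for the vector Laplacian. The starting identity is formula \eqref{eq2}, which holds pointwise for any $C^2$-smooth vector field regardless of the domain; applying it together with the definitions $a=\nabla\times A$ and $f=\nabla\cdot A$ yields exactly the governing equation $-\nabla^2 A=\nabla\times a-\nabla f$ in $D$. Since $\phi(s)=A|_S$ is prescribed, the field $A$ is a solution of the vector Dirichlet problem \eqref{eq13}, so the entire content of the theorem is the claim that this problem has a \emph{unique} solution, which then must coincide with $A$.

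First I would observe that \eqref{eq13} decouples into three scalar Dirichlet problems, one for each Cartesian component $A_i$, since the vector Laplacian acts componentwise in Cartesian coordinates: $-\nabla^2 A_i=(\nabla\times a-\nabla f)_i$ in $D$ with $A_i|_S=\phi_i(s)$. Each right-hand side is a known $L^2(D)$ function (it is built from first derivatives of the given data $a$ and $f$), and each boundary datum $\phi_i$ is the trace of an $H^2(D)$ function, hence admissible. The existence and uniqueness of a solution to the scalar Poisson--Dirichlet problem on a bounded $C^2$-smooth domain is classical, so existence is never in doubt; what the theorem really needs is uniqueness.

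For uniqueness I would argue exactly as in the whole-space case of Section~2. Suppose $A$ and $F$ both satisfy \eqref{eq13} with the same data $a$, $f$, and $\phi$. Then by linearity $w:=A-F$ satisfies $-\nabla^2 w=0$ in $D$ with $w|_S=0$. Each component $w_i$ is harmonic in $D$ and vanishes on $S$, so by the maximum principle (or equivalently by multiplying $\nabla^2 w_i=0$ by $w_i$, integrating over $D$, and using Green's formula to get $\int_D|\nabla w_i|^2\,dx=0$) each $w_i\equiv 0$. Hence $A=F$, establishing that the Dirichlet problem \eqref{eq13} determines $A$ uniquely.

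The main obstacle, such as it is, is not analytic but conceptual: one must confirm that the data $(\,a,f,\phi\,)$ are genuinely self-consistent, i.e.\ that the $A$ reconstructed by solving \eqref{eq13} indeed has curl $a$ and divergence $f$ rather than merely satisfying the second-order equation. In $\RRR$ this was automatic because \eqref{eq2} is an identity and the solution decaying at infinity is unique; in the bounded domain the given $a$, $f$, and $\phi$ are assumed to arise from an actual field $A\in H^2(D)$, so compatibility holds by hypothesis and the recovered solution coincides with that $A$ by the uniqueness just proved. I would therefore emphasize that \eqref{eq13} is used as a characterization of the already-existing field $A$, so the theorem is precisely the statement that $A$ is the unique $H^2(D)$ solution of this Dirichlet problem, which follows from the maximum principle as above.
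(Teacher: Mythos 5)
Your proof is correct and takes essentially the same route as the paper: the paper's entire argument is the two-line observation that recovery of $A$ reduces to the Dirichlet problem \eqref{eq13}, whose solvability and uniqueness are classical. You simply supply the details the paper leaves implicit --- the componentwise decoupling, the maximum-principle (or energy) uniqueness argument for the harmonic difference, and the remark that consistency of the data $(a,f,\phi)$ holds because they arise from the actual field $A\in H^2(D)$, so the unique solution of \eqref{eq13} must coincide with $A$.
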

\begin{proof}
    Theorem \ref{thm1} is already proved.

    To prove Theorem \ref{thm2} one reduces it to solving problem \eqref{eq13}. Existence and uniqueness of the solution to the Dirichlet problem \eqref{eq13} are known, so Theorem \ref{thm2} is proved.
\end{proof}
\begin{Remark}
    It follows from formula \eqref{eq7} that if $f=\nabla\cdot A=0$, then $A=B=\nabla\times \int_{\mathbb{R}^3} g(x,y)a(y)dy$, and if $a=\nabla\times A=0$, then $A=\nabla u$, where $u$ is defined in formula \eqref{eq10}.
\end{Remark}
\begin{Remark}
    Under the assumption $\gamma>3$, vector field $A(x)$ decays at infinity so that formulas \eqref{eq1}, \eqref{eq5}, and \eqref{eq10} are valid.
\end{Remark}
Let us estimate, for example, an integral of the type \eqref{eq9} assuming that $|\nabla f|\le \frac{c}{(1+|x|)^\gamma}, \gamma > 3$. Let $|x|=r, |y|=\rho$, $\theta$ be the angle between $x$ and $y$, and $x$ is directed along $y_3$ axis. Then one has
\begin{align}
    I_1: &=\int_{\RRR} \frac{dy}{|x-y|(1+|y|)^\gamma} \\
         &=2\pi \int_0^\infty \frac{dr r^2}{(1+r)^\gamma}\int_{-1}^1 \frac{ds}{(r^2-2r\rho s +\rho^2)^{1/2}} \\
         &=\frac{\pi}{\rho} \int_0^\infty \frac{dr r}{(1+r)^\gamma} (r+\rho -|r-\rho|) \\
         &=\frac{\pi}{\rho} \left(2\int_0^\rho \frac{dr r^2}{(1+r)^\gamma} + \int_\rho^\infty \frac{dr r 2 \rho}{(1+r)^\gamma} \right)\\
         &\le \frac{\pi}{\rho}\left( 2\left.\frac{(1+r)^{-\gamma+3}}{-\gamma+3}\right|_0^\rho + 2\rho\left.\frac{(1+r)^{-\gamma+2}}{-\gamma+2}\right|_\rho^\infty \right) \\
         &\le \frac{2\pi}{\rho(\gamma-3)}+\frac{2\pi}{\gamma-2}\frac{1}{\rho^{\gamma-2}}.
\end{align}
If $A \in H^2(\RRR, (1+|x|)^\gamma), \gamma>2$, let us estimate, for example, the following integral:
\begin{align}
    I_2^2 &:= \left(\int_{\RRR}\frac{1}{|x-y|}|\nabla\times a|dy \right)^2 \\
          &\le \int_{\RRR} \frac{dy}{|x-y|^2(1+|y|)^\gamma} \int_{\RRR}|\nabla\times a|^2(1+|y|)^\gamma dy \\
          &\le c\int_{\RRR} \frac{dy}{|x-y|^2(1+|y|)^\gamma} \\
          &\le 2\pi c\int_0^\infty \frac{dr r^2}{(1+r)^\gamma}\int_{-1}^1 \frac{ds}{r^2-2r\rho s +\rho^2} \\
          &=\frac{\pi c}{\rho} \int_0^\infty \frac{dr r}{(1+r)^\gamma} \ln \frac{c_1 \ln\rho}{\rho}, \quad \rho>1.
\end{align}
By $c, c_1>0$ estimation constants are denoted.

%{\bf ACKNOWLEDGEMENTS.}
\newpage

\end{document}